\documentclass[10pt]{article}

\usepackage{amsmath,amssymb,amsthm}
\usepackage{dsfont}
\usepackage{xcolor}
\usepackage{amsfonts}
\usepackage{algorithm}
\usepackage[colorlinks,linkcolor={blue},
citecolor={blue},urlcolor={red},]{hyperref}
\usepackage{hyperref}

\allowdisplaybreaks

\textwidth 6 in
\textheight 9 in
\oddsidemargin .1 in
\evensidemargin  .1 in
\topmargin -0.5 in
\footnotesep = 10 pt
\baselineskip = 13 pt
\hfuzz 30 pt
\parindent  15 pt
\parskip = 4 pt

\newtheorem{Theorem}{Theorem}[section]
\newtheorem{Proposition}[Theorem]{Proposition}
\newtheorem {Cor}[Theorem]{Corollary}
\newtheorem {pro}[Theorem]{Proposition}

\newtheorem {Lemma}[Theorem]{Lemma}
\newtheorem {rem}[Theorem]{Remark}
\newtheorem {rems}[Theorem]{Remarks}
\newtheorem {com}[Theorem]{Comment}
\newtheorem {coms}[Theorem]{Comments}
\newtheorem {warning}[Theorem]{Warning}
\newtheorem {notation}[Theorem]{Notation}
\newtheorem {Definition}[Theorem]{Definition}

\newtheorem {exer}[Theorem]{Exercise}

\newcommand{\bnota}{\begin{notation} \rm } \newcommand{\enota}{\end{notation}}
\newcommand{\bw}{\begin{warning} \rm } \newcommand{\ew}{\end{warning}}
\newcommand{\bcom}{\begin{com} \rm } \newcommand{\ecom}{\end{com}}
\newcommand{\bcoms}{\begin{coms} \rm } \newcommand{\ecoms}{\end{coms}}
\newcommand {\bdefi}{\begin{Definition}}
\newcommand {\edefi}{\end{Definition}}
\newcommand {\bl}{\begin{Lemma}}
\newcommand {\el}{\end{Lemma}}
\newcommand {\bethe}{\begin{Theorem}}
\newcommand {\eethe}{\end{Theorem}}
\newcommand {\bp}{\begin{pro}}
\newcommand {\ep}{\end{pro}}
\newcommand {\bcor}{\begin{Cor}}
\newcommand {\ecor}{\end{Cor}}
 \newcommand {\brem }{\begin{rem} \rm }
\newcommand {\erem }{\end{rem}}
 \newcommand {\brems }{\begin{rems} \rm }
\newcommand {\erems }{\end{rems}}
\newcommand {\bexo}{\begin{exer} \rm }
\newcommand {\eexo}{\hfill $\lhd$ \end{exer}}

\renewcommand{\tilde}{\widetilde}


\let\ssection=\section
\renewcommand{\section}{\setcounter{equation}{0}\ssection}
\setcounter{section}{0}

\usepackage{graphicx}
\pagestyle{myheadings}

\usepackage[title]{appendix}
\usepackage{array,xtab,ragged2e}

\renewcommand{\footnoterule}{%
  \kern 2 pt
  \hrule width \textwidth width 2in
  \kern 2pt
}

\newcommand{\be}{\begin{equation}}
\newcommand{\ee}{\end{equation}}
\newcommand{\bde}{\begin{displaymath}}
\newcommand{\ede}{\end{displaymath}}
\newcommand{\beq}{\begin{eqnarray*}}
\newcommand{\eeq}{\end{eqnarray*}}
\newcommand{\beqa}{\begin{eqnarray}}
\newcommand{\eeqa}{\end{eqnarray}}
\newcommand{\bel }{\left\{\begin{array}{ll}}
\newcommand{\eel}{\cr \end{array} \right.}


\title{{Defaultable perpetual Russian option\\ Under a last passage time model}}
\author{Zhuoshu Wu\thanks{Corresponding author. Email address: zhuoshuwu@hotmail.com.}  and Libo Li 
\\ School of Mathematics and Statistics
\\ University of New South Wales
\\ NSW 2052, Australia
}

\begin{document}

\maketitle

\begin{abstract}
In this article we provide a valuation formula for a defaultable perpetual Russian option in the Black-Scholes market where the default time is modelled as the last passage time of the running maximum of the stock price. In this setting, default occurs when the stock price fails to exceed its historical maximum, leading to a non-stopping time that depends on the path of the underlying asset.

\noindent \textbf{Keywords:} Option pricing, last exit time, optimal stopping, free-boundary problem
\end{abstract}

\section{Introduction}
Perpetual optimal stopping problems play a central role in mathematical finance, with the classical Russian option being a prominent example, see \cite{SheppandShiryaev1993}. In its standard formulation, the payoff depends on the running maximum of the underlying asset, and the problem admits an explicit solution via a reduction to a one-dimensional free-boundary problem.

In this paper, we study a variant of the perpetual Russian option in which the contract is subject to default risk. More precisely, we assume that default occurs when the asset price fails to exceed its historical maximum, so that the default time is modelled as a last passage time of the running maximum process. This modelling choice leads to a natural but non-standard extension of the classical framework, as the default time is not a stopping time with respect to the underlying filtration.

To address this difficulty, we employ the associated Az\'{e}ma supermartingale studied by Mansuy and Yor \cite{MansuyandYor} and Nikeghbali \cite{Nikeghbali} to reformulate the original problem into a standard optimal stopping problem with a modified gain function. This allows us to reduce the problem to a one-dimensional setting by introducing a suitable state process given by the ratio between the asset price and its running maximum. The resulting process is a reflected diffusion, and the problem can be analysed using classical free-boundary techniques as in \cite{PeskirandShiryaev}.

Our main contribution is to derive an explicit representation for the value function and to characterise the optimal stopping strategy. In particular, we show that (i) the optimal stopping time is of threshold type; (ii) the value function is determined by a unique solution to the Cauchy-Euler equation; (iii) the optimal stopping strategy is characterised as a unique solution to a nonlinear equation.

\section{Model and Reformulation} \label{TIFRABC4}

We consider a filtered probability space $(\Omega, \mathcal{F}, (\mathcal{F}_t)_{t\geq0}, \mathbb{P})$ satisfying the usual conditions, where the filtration $(\mathcal{F}_t)_{t\geq0}$ is generated by a standard Brownian motion $W = (W_t)_{t\geq0}$. We consider a complete market consisting of only one stock and one bond whose dynamics are given by the following stochastic differential equations:
\begin{align}
dX_t &= rX_t dt + \sigma X_t dW_t, \qquad X_0=x, \label{TDOX}\\
dB_t &= rB_t dt, \qquad\qquad\qquad\,\,\,\,\,\, B_0 = 1,
\end{align}
where $r>0$ is the interest rate and $\sigma>0$ is the volatility coefficient. We denote $S=(S_t)_{t\geq0}$ as the running maximum of the stock price process, i.e.,
\begin{align*}
S_t = s\vee \max_{u\in[0, t]} X_u,\qquad\qquad S_0 = s.
\end{align*}
Without loss of generality, we may assume that $x=1$ and $s\geq1$, see \cite{PeskirandShiryaev}.

The main optimal stopping problem can be reformulated as follows by the measurability of stopping time $\tau$ and the tower property:
\begin{align}
V &=\sup_{\tau} \mathbb{E}_{0, x, s} \left[ e^{-(r+\lambda)\tau} \left( S_\tau - L X_\tau \right)^+ I\{\theta>\tau\}\right] \nonumber\\
&=\sup_{\tau} \mathbb{E}_{0, x, s} \left[  \mathbb{E}_{0, x, s} \left[ e^{-(r+\lambda)\tau} \left( S_\tau - L X_\tau \right)^+ I\{\theta>\tau\} |\mathcal{F}_\tau \right]\right] \nonumber\\
&=\sup_{\tau}  \mathbb{E}_{0, x, s} \left[ e^{-(r+\lambda)\tau} \left( S_\tau - L X_\tau \right)^+ \mathbb{P}(\theta>\tau\big{|} \mathcal{F}_\tau) \right],\label{TOP}
\end{align}
where the supremum is taken over all the stopping times $\tau\in[0, \infty)$, $\theta = \sup\{t\geq 0: X_t = S_t\}$ is the last exit time and $\lambda>0$ is the discounting rate. Furthermore, we let $L>0$ and denote $I$ as the indicator function. The American option with payoff function $ \left( S_\tau - L X_\tau \right)^+$ is known as the Russian option; additionally, we incorporate the indicator function to model the presence of default risk or early termination provisions in over-the-counter markets, see \cite{HW1995}.

First of all, we investigate the property of Az\'{e}ma supermartingale associated with $\theta$. 

\begin{Proposition}
In our setting, the following formula holds for $r-\frac{\sigma^2}{2}<0$,
\begin{align*}
\mathbb{P}\left(\theta>t\big{|}\mathcal{F}_t \right) = \left(\frac{S_t}{X_t}\right)^\alpha,
\end{align*}
where $\alpha=\frac{2r}{\sigma^2}-1$.
\end{Proposition}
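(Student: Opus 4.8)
The plan is to compute the Azéma supermartingale $P(\theta > t \mid \mathcal{F}_t)$ for the last exit time $\theta = \sup\{t \geq 0 : X_t = S_t\}$ by exploiting the Markovian structure of the pair $(X_t, S_t)$ together with the strong Markov property. First I would observe that, conditionally on $\mathcal{F}_t$, the event $\{\theta > t\}$ is (up to null sets) the event that the process $X$ started afresh from $X_t$ will at some future time $u > t$ return to the level $S_t = \max_{0\le v \le t} X_v$; that is, the future running maximum will, at least once after $t$, equal the current running maximum. Since the increments of $X$ after $t$ are independent of $\mathcal{F}_t$ and $X$ is a geometric Brownian motion, this reduces to a one-dimensional first-passage computation: letting $\hat X$ be an independent copy of $X$ started at $X_t$, we need $P(\sup_{u \ge 0} \hat X_u \ge S_t)$, evaluated at the observed values. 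Writing $\hat X_u = X_t \exp((r - \sigma^2/2)u + \sigma \hat W_u)$, this is the probability that the drifted Brownian motion $(r - \sigma^2/2)u + \sigma \hat W_u$ ever reaches the level $\log(S_t / X_t) \geq 0$.

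The key step is then the classical formula for the maximum of a Brownian motion with negative drift. Under the assumption $r - \sigma^2/2 < 0$, the drift $\mu := r - \sigma^2/2$ is negative, so $M := \sup_{u \ge 0}(\mu u + \sigma \hat W_u)$ is a.s.\ finite and exponentially distributed: $P(M \ge y) = e^{-2|\mu| y / \sigma^2}$ for $y \ge 0$. Substituting $y = \log(S_t/X_t)$ and $2|\mu|/\sigma^2 = (\sigma^2 - 2r)/\sigma^2 = 1 - 2r/\sigma^2$ gives
\begin{align*}
P\big(M \ge \log(S_t/X_t)\big) = \exp\!\Big( -\big(1 - \tfrac{2r}{\sigma^2}\big)\log\tfrac{S_t}{X_t}\Big) = \Big(\tfrac{S_t}{X_t}\Big)^{\frac{2r}{\sigma^2} - 1} = \Big(\tfrac{S_t}{X_t}\Big)^{\alpha},
\end{align*}
which is the claimed expression. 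To make this rigorous I would phrase it via the strong Markov property: on $\mathcal{F}_t$ one has $P(\theta > t \mid \mathcal{F}_t) = g(X_t, S_t)$ where $g(x,s) = P_x(\sup_{u \ge 0} X_u \ge s)$, and then evaluate $g$ using the first-passage result, being careful that when $S_t = X_t$ (the process currently at its running maximum) the formula correctly returns $1$, consistent with $\theta > t$ holding on that event.

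The main obstacle I anticipate is the measure-theoretic care needed around the definition of $\theta$ as a \emph{last} exit time — it is not a stopping time, so one cannot directly apply the strong Markov property at $\theta$, and one must instead argue that $\{\theta > t\}$ coincides up to $P$-null sets with $\{\exists u > t : X_u = S_u\} = \{\sup_{u \ge t} X_u \ge S_t\}$, using continuity of paths and the fact that once $X$ falls strictly below its past maximum it may still return. A secondary subtlety is justifying that the conditional probability depends on $\mathcal{F}_t$ only through $(X_t, S_t)$, which follows from the Markov property of the pair $(X, S)$ and the independence of post-$t$ increments; the remaining first-passage computation is entirely standard.
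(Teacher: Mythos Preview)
Your proposal is correct and follows essentially the same route as the paper: both identify $\{\theta>t\}=\{\sup_{u\ge t}X_u\ge S_t\}$, use the Markov property to reduce the conditional probability to $P\big(\sup_{u\ge0}(\mu u+\sigma W_u)\ge \log(S_t/X_t)\big)$ with $\mu=r-\sigma^2/2<0$, and then apply the classical exponential-tail formula for the all-time maximum of a negatively drifted Brownian motion. The paper's write-up is terser (it cites Shiryaev for the last step rather than spelling out the exponential law), but the argument is the same.
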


\begin{proof}
We note that $\{\theta>t\}=\{\max_{u \geq t} X_u \geq S_t\}$, and hence,
\begin{align*}
\mathbb{P}(\theta>t | \mathcal{F}_t)&=\mathbb{P} \left(\max_{u \geq t} X_u \geq S_t | \mathcal{F}_t\right)=\mathbb{P}\left(\frac{\max_{u \geq t} X_u}{X_t} \geq \frac{S_t}{X_t} | \mathcal{F}_t\right)\\
&=\mathbb{P}\left( \max_{u\geq0} \left[\left( r-\frac{\sigma^2}{2} \right)u + \sigma W_u\right] \geq \log{\frac{S_t}{X_t}} \right)\\
&=1- \mathbb{P}\left(  \max_{u\geq0}\left[\left( r-\frac{\sigma^2}{2} \right)u + \sigma W_u\right] \leq \log{\frac{S_t}{X_t}} \right)\\
&=1-(1- e^{\alpha \log{\frac{S_t}{X_t}} })=\left( \frac{S_t}{X_t} \right)^\alpha,
\end{align*}
where the fifth equality follows from \cite[Pages 759-760]{Shiryaev1999}.
\end{proof}

\begin{rem}
Here one should note that by the general result from \cite[Pages 759-760]{Shiryaev1999}, $\mathbb{P}\left(\theta>t\big{|}\mathcal{F}_t \right) = 1$ for $r-\frac{\sigma^2}{2}\geq0$, problem \eqref{TOP} is therefore the classic Russian option pricing problem. For the detailed solution of pricing Russian option, see \cite[Page 400] {PeskirandShiryaev}. Hereafter, we assume $r-\frac{\sigma^2}{2}<0$ such that $\alpha<0$.
\end{rem}

Therefore, \eqref{TOP} gets the form
\begin{align}
V=\sup_{\tau} \mathbb{E}_{0, x, s} \left[ e^{-(r+\lambda)\tau} \left( S_\tau - L X_\tau \right)^+ \left( \frac{S_\tau}{X_\tau} \right)^\alpha \right].\label{STOP}
\end{align}

With some additional effort, we can reduce the above two-dimensional problem \eqref{STOP} into a one-dimensional problem. To do so, we introduce the probability measure $\tilde{\mathbb{P}}$, which satisfies $\left.\frac{d\tilde{\mathbb{P}}}{d\mathbb{P}}\right|_{\mathcal{F}_t}=e^{\sigma W_t - \frac{\sigma^2}{2}t}$ and the process $Y=(Y_t)_{t\geq0}=\left( \frac{S_t}{X_t} \right)_{t\geq0}$.

The strong solution of SDE \eqref{TDOX}, together with Girsanov's theorem, yields
\begin{align}
X_t=x e^{\left( r-\frac{\sigma^2}{2} \right)t +\sigma W_t } = x e^{\left( r+\frac{\sigma^2}{2} \right)t +\sigma \tilde{W}_t }, \label{TSSOGBMAC4}
\end{align}
for $t\geq0$, where $W$ and $\tilde{W}:=W_t - \sigma t$ are standard Brownian motions under measure $\mathbb{P}$ and $\tilde{\mathbb{P}}$ respectively.

\begin{Cor} \label{POPYC4}
The process $Y=(Y_t)_{t\geq0}$ is a (time-homogeneous) strong Markov process on the phase space $[1, \infty)$ with instantaneous reflection at the point $\{1\}$, which satisfies the SDE
\begin{align}
dY_t = -r Y_t dt -\sigma Y_t d\tilde{W}_t + d R_t, \qquad Y_0=y=\frac{s}{x}, \label{TDOY}
\end{align}
where $d R_t=I\{ Y_t = 1 \} \frac{dS_t}{X_t}$ and $\tilde{W}$ is the standard Brownian motion under measure $\tilde{\mathbb{P}}$.
\end{Cor}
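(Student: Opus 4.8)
The plan is to derive the SDE for $Y_t = S_t/X_t$ by a direct application of It\^o's formula under the measure $\tilde P$, and then to justify the Markov and reflection properties by appealing to the classical theory of reflected diffusions. First I would note that under $\tilde P$ we have, from \eqref{TSSOGBMAC4}, $X_t = x\exp\bigl( (r+\tfrac{\sigma^2}{2})t + \sigma\tilde W_t\bigr)$, so $1/X_t = x^{-1}\exp\bigl(-(r+\tfrac{\sigma^2}{2})t - \sigma\tilde W_t\bigr)$, which by It\^o satisfies $d(1/X_t) = (1/X_t)\bigl(-r\,dt - \sigma\,d\tilde W_t\bigr)$; in particular $1/X$ has finite variation drift $-r$ and diffusion coefficient $-\sigma$ when written against itself. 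Since $S$ is a continuous nondecreasing process that increases only on the set $\{t : X_t = S_t\} = \{t : Y_t = 1\}$, the product rule applied to $Y_t = S_t \cdot (1/X_t)$ gives
\[
dY_t = S_t\, d(1/X_t) + (1/X_t)\, dS_t = -rY_t\,dt - \sigma Y_t\, d\tilde W_t + \frac{dS_t}{X_t},
\]
and because $dS_t$ is carried by $\{Y_t = 1\}$ we may write the last term as $dR_t = I\{Y_t = 1\}\, dS_t / X_t$, which is exactly \eqref{TDOY}. The cross-variation term $d\langle S, 1/X\rangle_t$ vanishes since $S$ has finite variation.

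Next I would identify $R$ as the local-time-type reflection term. The pair $(Y, R)$ solves a Skorokhod-type problem on $[1,\infty)$: $Y$ stays in $[1,\infty)$, $R$ is nondecreasing, continuous, $R_0 = 0$, and $R$ increases only when $Y_t = 1$. One checks that $R_t = \int_0^t I\{Y_u=1\}\,X_u^{-1}\,dS_u$ is finite and continuous (it is dominated by $X^{-1}$ times the increase of $S$, both locally bounded), and that $Y_t \ge 1$ holds pathwise because $S_t \ge X_t$ always. Uniqueness of the decomposition then follows from the standard Skorokhod lemma. The coefficients $b(y) = -ry$ and $a(y) = -\sigma y$ are Lipschitz on any compact subset of $[1,\infty)$ and grow linearly, so the reflected SDE \eqref{TDOY} has a pathwise unique strong solution; by the general theory of one-dimensional reflected diffusions with Lipschitz coefficients (e.g., the construction in Shiryaev \cite[Page 770]{Shiryaev1999}, or the classical results of Lions--Sznitman), this solution is a time-homogeneous strong Markov process on $[1,\infty)$ with instantaneous reflection at $\{1\}$, since $1$ is a regular reflecting boundary (the reflection term acts with positive speed whenever $Y$ touches $1$).

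I expect the main obstacle to be the careful bookkeeping around the reflection term rather than any deep difficulty: one must verify rigorously that $S$ truly increases only on $\{Y = 1\}$ (immediate from $S_t = s\vee\max_{u\le t}X_u$, since $S$ can only grow when $X_t = S_t$), that the stochastic integral against $\tilde W$ is well defined on each finite horizon (clear since $Y$ is continuous hence locally bounded and the solution is non-explosive by the linear growth of the coefficients), and that no extra local-time contribution at $\{1\}$ is being silently absorbed into the martingale part. Strict positivity and non-explosion of $X$ guarantee $Y$ never reaches $0$ or $+\infty$ in finite time, so the state space is exactly $[1,\infty)$ and the process is conservative. Once these points are settled, the strong Markov property is inherited from the strong-solution property of the reflected SDE together with time-homogeneity of its coefficients, and the corollary follows.
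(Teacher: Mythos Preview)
Your derivation of the SDE is essentially the paper's: both apply It\^o's product rule to $Y_t=S_t\cdot(1/X_t)$ and use that $S$ has finite variation with support on $\{Y=1\}$; the only cosmetic difference is that you compute $d(1/X_t)$ directly under $\tilde P$, whereas the paper first obtains $dY_t=(\sigma^2-r)Y_t\,dt-\sigma Y_t\,dW_t+dR_t$ under $P$ and then substitutes $\tilde W_t=W_t-\sigma t$.

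Where you diverge is in the remaining claims. For instantaneous reflection and the time-homogeneous strong Markov property you invoke the Skorokhod problem and general existence/uniqueness theory for reflected SDEs with locally Lipschitz, linearly growing coefficients (Lions--Sznitman, Tanaka). The paper instead argues each point by hand: it shows $\int_0^t I\{Y_u=1\}\,du=0$ a.s.\ by taking expectations, applying Fubini, and noting $\tilde P_y(Y_u=1)=0$ since $(X_u,S_u)$ has a density; and it verifies time-homogeneity by writing $Y_{t+h}^{t,y}$ and $Y_h^{0,y}$ as solutions of the same SDE driven by $\hat W_s=\tilde W_{t+s}-\tilde W_t$ and $\tilde W_s$ respectively, checking $dR_{t+s}\stackrel{d}{=}dR_s$, and appealing to weak uniqueness. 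Your route is cleaner and more robust once one is willing to cite the reflected-SDE machinery, and it delivers the strong Markov property directly from pathwise uniqueness; the paper's route is more self-contained but its treatment of the reflection increment in the time-homogeneity step is somewhat informal. Either way the result stands, and your identification of $(Y,R)$ as the Skorokhod pair, together with the observation $S_t\ge X_t$ forcing $Y_t\ge 1$, is the right way to package the reflection.
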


\begin{proof}
We essentially follow the argument from \cite[Page 770]{Shiryaev1999}.

(i) By exploiting It\^o formula, we have
\begin{align*}
dY_t & =S_t d \left( \frac{1}{X_t}\right) + \left( \frac{1}{X_t}\right) dS_t\\
&= - \frac{S_t}{X_t^2} (rX_t dt + \sigma X_t dW_t) + \frac{S_t}{X_t^3} \sigma^2 X_t^2 dt + dR_t\\
& = (\sigma^2-r)Y_tdt - \sigma Y_t dW_t  + dR_t,
\end{align*}
where we set $dR_t =\left( \frac{1}{X_t}\right) dS_t$ and it only changes value as $(Y_t)_{t\geq0}$ arrives at the boundary point $\{1\}$, to stress this fact, we write $(R_t)_{t\geq0}=\int_0^t I\{ Y_s = 1\} \frac{1}{X_s}dS_s$; after which, upon letting $\tilde{W}_t = W_t - \sigma t$, SDE \eqref{TDOY} follows. 

(ii) Next in line is to show that  $\{1\}$ is the instantaneous reflection point (i.e. the process $(Y_t)_{t\geq0}$ spends zero time at $\{1\}$ $\tilde{\mathbb{P}}_y$-a.s.), that is,
\begin{align*}
\int_{0}^t I \left( Y_u = 1 \right) du = 0, \qquad \tilde{P}\text{-a.s}
\end{align*}
for each $t>0$.
Via taking expectation under measure $\tilde{\mathbb{P}}$ and using Fubini's theorem,
\begin{align*}
\tilde{\mathbb{E}}_y \int_{0}^t I\{ Y_u = 1 \} du = \int_0^t \tilde{\mathbb{E}}_y \left[ I\{ Y_u = 1 \}\right] du=  \int_0^t \tilde{\mathbb{P}}_y \left( Y_u = 1 \right) du = 0,
\end{align*}
where the last equality holds via the fact that the probability density function of $\{ X_t, S_t\}$ exists, implying that $Y$ is a continuous random variable, and that the probability of a continuous random variable being a certain value is $0$. The desired statement then follows from the simple fact that for non-negative random variables $X$, if $\mathbb{E}\left[X\right]=0$, then $X=0$ a.s.

(iii) The process $Y$ has the property of being time-homogeneous, in the following sense: 
\begin{align*}
Y_{t+h}^{t, y} &= y - \int_t^{t+h} r Y_u^{t, y} du - \int_t^{t+h} \sigma Y_u^{t, y} d\tilde{W}_u + \int_t^{t+h} dR_u\\
&= y - \int_0^h r Y_{t+s}^{t, y} ds - \int_0^h \sigma Y_{t+s}^{t, y} d\hat{W}_s + \int_0^h dR_{t+s}, \qquad\text{($u=t+s$)}
\end{align*}
where $\hat{W}_s = \tilde{W}_{t+s} - \tilde{W}_t$, $s\geq0$. On the other hand, of course,
\begin{align*}
Y_h^{0, y} = y - \int_0^h r Y_s^{0, y} ds - \int_0^h \sigma Y_s^{0, y} d\tilde{W}_s + \int_0^h dR_s,
\end{align*}
Since $\hat{W}_s \buildrel d\over= \tilde{W}_s$ and 
\begin{align*}
dR_{t+s} &= \frac{1}{X_{t+s}}dS_{t+s}=\frac{1}{X_t e^{\left( r+\frac{\sigma^2}{2} \right) s + \sigma \tilde{W}_s}} d \left(\max_{0 \leq u \leq t+s} X_u\right)\\
&=\frac{1}{x e^{\left( r+\frac{\sigma^2}{2} \right) s + \sigma \tilde{W}_s}} d \left( \max_{0\leq u \leq t} X_u \vee \max_{t\leq u \leq t+s} X_u  \right)\\
&\buildrel d\over=  \frac{1}{x e^{\left( r+\frac{\sigma^2}{2} \right) s + \sigma \tilde{W}_s}} d \left( s \vee \max_{0\leq u \leq s} X_u  \right)= dR_s,\\
\end{align*}
under measure $\tilde{\mathbb{P}}$, it follows by weak uniqueness of the solution of the SDEs that $\big\{ Y_{t+h}^{t, y} \big\}_{h\geq0} \buildrel d\over= \big\{ Y_{h}^{0, y} \big\}_{h\geq0}$, that is, the process $Y$ is time-homogeneous \footnote{We write $Y_{t+s}^{t, y}$ simply as $Y_{t+s}^y$ hereafter whenever needed, so that when $t=0$, $Y_s^{0, y}$ is written as $Y_s^y$.}.
\end{proof}
It then follows that $(Y_t)_{t\geq0}$ increases on the set $\{t: Y_t=1\}$, with the infinitesimal generator
\begin{align*}
\mathbb{L}_Y &= -r y \frac{\partial}{\partial y} + \frac{1}{2}\sigma^2 y^2 \frac{\partial^2}{\partial y^2}, \qquad \text{in $(1, \infty)$},
\end{align*}
and if function $f$ is a function on $[1, \infty)$ such that $f\in C^2((1, \infty))$ and there exists  $\lim\limits_{y\to1} f'(y) = f'(1+)$
then $\lim\limits_{y\to1}f'(y)=0$. Summarising our findings so far, we see that the problem \eqref{STOP}, by using the process $Y$ and change-of-measure, could be reduced to the following optimal stopping problem\footnote{For notational convenience, we simply write $\tilde{E}_{y}(A)$ instead of $\tilde{E}_{0, y}(A)$ hereafter.}
\begin{align}
V&=\sup_{\tau} \mathbb{E}_{0, x, s} \left[ e^{-(r+\lambda)\tau} X_\tau \left( \frac{S_\tau}{X_\tau} - L  \right)^+ \left( \frac{S_\tau}{X_\tau} \right)^\alpha \right]\nonumber\\
&=\sup_{\tau} \tilde{\mathbb{E}}_{0, y} \left[ e^{-\lambda\tau} \left( Y_\tau - L  \right)^+Y_\tau^\alpha \right].\label{TTOP}
\end{align}

\section{The Free-boundary Problem} \label{TIFTFPC4}

We are now ready to turn our attention to the \textit{free boundary problem} but first for the sake of brevity, we define the gain function $G(y)=(y-L)^+ y^\alpha$ and then we invoke the local time-space formula to obtain
\begin{align*}
e^{-\lambda t} G(Y_t^y)&=G(y)+\int_0^t e^{-\lambda u} \left( -\lambda G (Y_u^y) - r Y_u^y G_y (Y_u^y) + \frac{1}{2}\sigma^2 \left(Y_u^y\right) ^2 G_{yy} (Y_u^y) \right) I\{ Y_u^y > L \} du\\
&\qquad\,\,\,\,\,\,\,\,\,\,-\int_0^t e^{-\lambda u} \sigma Y_u^y G_y (Y_u^y) I\{ Y_u^y \neq L \} d\tilde{W}_u\\
&\qquad\,\,\,\,\,\,\,\,\,\,+\int_0^t e^{-\lambda u} \sigma Y_u^y G_y (1+) dR_u +\frac{1}{2}\int_0^t e^{-\lambda u} \left( G_y(L+) - G_y(L-)  \right) dl_u^L(Y),
\end{align*}
and 
\begin{align*}
G_y(L+) - G_y(L-) =
\begin{cases}
 L^{\alpha}, & L \geq 1,\\
 0, & L<1,
\end{cases} \qquad
G_y(1+) =
\begin{cases}
0, & L\geq 1, \\
\alpha(1-L) + 1, &L<1,
\end{cases}
\end{align*}
where $l_u^L(Y)$ is the local time of $Y$ at the level $L$ given by
\begin{align*}
l_u^L(Y)&=\tilde{\mathbb{P}}_y-\lim_{\epsilon\to0} \frac{1}{2\epsilon} \int_0^u I\{ \big{|}Y_r-L\big{|}<\epsilon \} d \langle Y, Y \rangle_r,
\end{align*}
from which, we note that when $L<1$, the point $L$ lies outside the state space of $Y$, hence the local time term vanishes, and we also observe that the further away $Y$ gets from $\max\{1, L \}$, the less likely the gain function will increase upon continuing, which suggests that there exists a point $b\in[\max\{1, L \}, \infty)$ such that the stopping time
\begin{align}
\tau_b=\inf\{ t\geq0: Y^y_t\geq b \} \label{OPST}
\end{align}
should be optimal in the problem \eqref{TTOP}, such fact will soon be confirmed. 

It is then natural to ask whether or not the stopping time $\tau_b$ is finite.

\begin{Cor} \label{OPSIF}
The stopping time $\tau_b$ is finite.
\end{Cor}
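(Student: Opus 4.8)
The plan is to show that the stopping time $\tau_b=\inf\{t\geq 0: Y_t^y\geq b\}$ is finite $\tilde P_y$-a.s. (in fact, one should aim for $\tilde E_y[\tau_b]<\infty$, which is strictly stronger and more convenient downstream). The key observation is that $b\in[\max(1,L),\infty)$, so for $y\geq b$ we have $\tau_b=0$ and there is nothing to prove; thus assume $y\in[1,b)$ and work with the process on the interval $[1,b]$. On this interval the diffusion coefficient $\sigma^2 Y_t^2$ is bounded below away from zero, the drift $-rY_t$ is bounded, and there is a reflecting (Neumann) boundary at $\{1\}$ that pushes the process upward, never downward. Intuitively, the reflection at $1$ can only accelerate the hitting of $b$, so comparison with a diffusion on $[1,b]$ with reflection at $1$ and absorption at $b$ should do the job.

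First I would fix $y\in[1,b)$ and apply Dynkin's formula (or the local time–space formula from the excerpt, specialised to a smooth test function rather than $G$) to a well-chosen $C^2$ function $h$ on $[1,b]$ that satisfies $\mathbb{L}_Y h \leq -1$ on $(1,b)$ together with the Neumann condition $h'(1+)\leq 0$, so that the $dR_t$ term has the right sign, and $0\leq h\leq C$ on $[1,b]$. Such an $h$ exists because $\mathbb{L}_Y = -ry\,\partial_y + \tfrac12\sigma^2 y^2\,\partial_{yy}$ is a uniformly elliptic operator on the compact interval $[1,b]$; for instance one can take $h$ to be (a shift of) the solution of $\mathbb{L}_Y h = -1$ on $(1,b)$ with $h'(1+)=0$ and $h(b)=0$, or simply exhibit an explicit quadratic/logarithmic majorant and check the two inequalities by a direct computation (the precise constants are routine and I would not grind through them). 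Then applying Dynkin to $h(Y_{t\wedge\tau_b}^y)$ and using $\mathbb{L}_Y h\leq -1$, the sign of the reflection term, and the boundedness of $h$ gives
\begin{align*}
\tilde E_y\big[\,t\wedge\tau_b\,\big] \;\leq\; \tilde E_y\Big[\int_0^{t\wedge\tau_b}\big(-\mathbb{L}_Y h\big)(Y_u^y)\,du\Big] \;\leq\; h(y) - \tilde E_y\big[h(Y_{t\wedge\tau_b}^y)\big] \;\leq\; 2\|h\|_\infty,
\end{align*}
and letting $t\to\infty$ via monotone convergence yields $\tilde E_y[\tau_b]\leq 2\|h\|_\infty<\infty$, hence $\tau_b<\infty$ $\tilde P_y$-a.s.

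The main obstacle is handling the reflecting boundary at $\{1\}$ correctly: I must make sure the local time term $\int_0^{t}\sigma Y_u h'(1+)\,dR_u$ in the decomposition has a definite sign (it vanishes if I impose $h'(1+)=0$, which is the cleanest route given the remark in the excerpt that $C^2$ functions with a limit at $1$ have $f'(1+)=0$ — but I should be careful about which functions that applies to and instead just build $h'(1+)=0$ into the construction), and that the martingale term $\int_0^{t\wedge\tau_b}\sigma Y_u^y h'(Y_u^y)\,d\tilde W_u$ is a genuine martingale, which is immediate since $Y^y$ stays in the compact set $[1,b]$ up to $\tau_b$ so the integrand is bounded. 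An alternative, if one wants to avoid constructing $h$, is a probabilistic scaling/support argument: since $Y$ on $[1,b]$ has a uniformly nondegenerate generator and the boundary at $1$ only reflects upward, on each unit time interval there is a probability bounded below (uniformly in the starting point in $[1,b]$, by the strong Markov property and support theorem for the driving Brownian motion) that $Y$ reaches $b$; iterating gives an exponential tail bound on $\tau_b$ and in particular $\tau_b<\infty$ a.s. I would present the Dynkin-function argument as the main proof since it is shortest and also delivers the integrability $\tilde E_y[\tau_b]<\infty$ that is convenient for the verification arguments to follow.
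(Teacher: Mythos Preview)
Your argument is correct: building a bounded $C^2$ function $h$ on $[1,b]$ with $\mathbb{L}_Y h\le -1$ and $h'(1+)\le 0$ (for instance $h(y)=A(b^2-y^2)$ with $A=(\sigma^2-2r)^{-1}$, using the standing assumption $r-\tfrac{\sigma^2}{2}<0$ so that $\mathbb{L}_Y h=(2r-\sigma^2)Ay^2\le -1$) and applying Dynkin's formula gives $\tilde E_y[\tau_b]<\infty$, hence $\tau_b<\infty$ $\tilde P_y$-a.s. The reflection and martingale terms behave exactly as you describe, since $Y$ is confined to the compact interval $[1,b]$ up to $\tau_b$.

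The paper takes a quite different, purely probabilistic route. It exploits the explicit representation $Y_t=S_t/X_t$ with $X$ a geometric Brownian motion under $\tilde P$, bounds $\max_{0\le t\le n}Y_t$ from below by the maximum of the independent unit increments $\sigma(\tilde W_k-\tilde W_{k-1})$, $k=1,\dots,n$ (shifted by a drift constant), and then invokes the second Borel--Cantelli lemma on the events $\{\tilde W_{k+1}-\tilde W_k\ge C\}$ to conclude $\tilde P_1(\max_{t\ge 0}Y_t\ge b)=1$. Your Lyapunov/Dynkin approach is more general (it uses only uniform ellipticity on $[1,b]$ and the sign of the reflection, not the particular form of $Y$) and yields the stronger conclusion $\tilde E_y[\tau_b]<\infty$; the paper's argument is more elementary in that it avoids any ODE construction, but it gives only a.s.\ finiteness and leans on the explicit Brownian structure of $Y$. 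Either route suffices here, since the subsequent verification in Theorem~\ref{THM1} localises with $\tau_b\wedge n$ and never actually uses integrability of $\tau_b$.
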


\begin{proof}
We essentially follow the proof from \cite[Page 116]{SheppandShiryaev1993}, the original proof is missing the minus sign. To prove $\tilde{\mathbb{P}}_y(\tau_b<\infty)=1$ for $y\in[1, b)$, it suffices to show that 
\begin{align*}
\tilde{\mathbb{P}}_y \left( \max\limits_{t\geq0} Y_t \geq b \right)=1, \,\, \text{for $y=1$},
\end{align*}

To begin with, for $n\geq 1$, 
\begin{align*}
\tilde{\mathbb{P}}_y \left( \max\limits_{0 \leq t \leq n} \frac{S_t}{X_t} \geq b \right)& = \tilde{\mathbb{P}}_y \left( \max\limits_{0 \leq t \leq n} \frac{\max\limits_{0\leq u\leq t}  X_u}{X_t} \geq b \right) =  \tilde{\mathbb{P}}_y \left( \max\limits_{0 \leq u \leq t \leq n} \frac{X_u}{X_t} \geq b \right) \\
&=  \tilde{\mathbb{P}}_y \left( \max\limits_{0 \leq u \leq t \leq n} e^{\left( r+\frac{\sigma^2}{2}  \right) (u-t) + \sigma \left(\tilde{W}_u - \tilde{W}_t \right)} \geq b \right) \\
&\geq \tilde{\mathbb{P}}_y \left(  \max \bigg\{ \sigma \left(\tilde{W}_1 - \tilde{W}_0 \right), \dots, \sigma \left(\tilde{W}_{n} - \tilde{W}_{n-1} \right)  \bigg\}  \geq \log{b} +  r+\frac{\sigma^2}{2}   \right).
\end{align*}

Then, let $C= \frac{\log{b} +  r+\frac{\sigma^2}{2}}{\sigma} $, $X_n = \max \bigg\{  \tilde{W}_1 - \tilde{W}_0 , \dots, \tilde{W}_{n} - \tilde{W}_{n-1} \bigg\}$ and set event 
\[A_k=\{ \tilde{W}_{k+1} - \tilde{W}_{k} \geq C \}, \qquad \text{for $k\geq0$}.\]

Note that the events $\{A_k, \,\, 0 \leq k \leq n\}$ are independent and the crucial observation is that 
\begin{align*}
 \bigg\{ \lim_{n\to\infty} X_n \geq C    \bigg\} \Leftrightarrow \bigg\{ A_k \,\, \text{infinitely often} \bigg\}.
\end{align*}

Since $0 < \tilde{\mathbb{P}}_y(A_k) < 1$, it follows that $\sum_{k=0}^{\infty} \tilde{\mathbb{P}}_y (A_k) = + \infty$. An appeal to the second Borel-Cantelli lemma asserts that $\tilde{P}_y \left(A_k  \,\, \text{infinitely often}\right)=1$, and consequently that $\tilde{\mathbb{P}}_y \left(\max\limits_{t\geq0} Y_t \geq b\right) \geq \tilde{\mathbb{P}}_y \left(A_k  \,\, \text{infinitely often}\right) = 1$ as desired.
\end{proof}

By reviewing \cite[Page 40, Theorem 2.7]{PeskirandShiryaev}, we know that the next task is to find the smallest superharmonic function $\hat{V}$ that dominates $G$ and the unknown point $b$ by solving the corresponding free boundary problem:
\begin{align}
&\mathbb{L}_Y \hat{V} = \lambda \hat{V} \qquad\,\,\,\,\,\,\,\, \text{for $y\in(1, b)$}, \label{fbp1}\\
&\hat{V}(y) = G(y) \qquad\,\,\,\, \text{for $y=b$}, \label{fbp2}\\
&\hat{V}_y(y) = G_y(y) \qquad  \text{for $y=b$}, \qquad (\textit{smooth fit}), \label{fbp3}\\
&\hat{V}_y(y)=0 \qquad \qquad \text{for $y=1$}, \qquad (\textit{normal reflection}),\label{fbp4}\\
&\hat{V}(y)>G(y) \qquad\,\,\,\, \text{for $y\in[1, b)$}, \label{fbp5} \\
&\hat{V}(y)=G(y) \qquad\,\,\,\, \text{for $y\in(b, \infty)$}. \label{fbp6}
\end{align}
We proceed to solve the free-boundary problem, after which, we prove that its solution coincides with the value function in \eqref{TTOP} and $b$ is unique.

First, we apply the infinitesimal generator of $Y$ to \eqref{fbp1} and obtain the Cauchy-Euler equation as follows
\begin{align}
 &\frac{1}{2} \sigma^2 y^2 \frac{d^2 \hat{V}}{d y^2} - ry \frac{d \hat{V}}{d y}  - \lambda \hat{V}= 0, \label{CEE}
\end{align}
from which, we know the solution takes the form
\begin{align}
&\hat{V}(y)=y^p. \label{gs}
\end{align}

By inserting \eqref{gs} into \eqref{CEE}, we obtain the following quadratic equation
\begin{align}
\frac{1}{2} \sigma^2  p^2 - \left( r+\frac{\sigma^2}{2} \right)p - \lambda =0, \label{QDE}
\end{align}
whose roots are given by
\begin{align*}
p_i&=\frac{\left(r +\frac{\sigma^2}{2}\right) \pm \sqrt{\left(r +\frac{\sigma^2}{2}\right)^2 + 2\lambda \sigma^2} }{\sigma^2}, \qquad\text{$(i=1, 2)$},
\end{align*}
where $p_1>1$, $p_2<0$. The general solution of \eqref{fbp1} therefore equals
\begin{align}
\hat{V}(y)= C_1 y^{p_1} + C_2 y^{p_2}, \label{GGS}
\end{align}
where $C_1$ and $C_2$ are arbitrary constants. Applying conditions \eqref{fbp2} and \eqref{fbp4} to \eqref{GGS} gives us
\begin{align*}
C_1= - \frac{p_2(b^{\alpha+1} - L b^\alpha)}{p_1 b^{p_2}- p_2 b^{p_1}},\qquad C_2= \frac{p_1(b^{\alpha+1} - L b^\alpha)}{p_1 b^{p_2}- p_2 b^{p_1}},
\end{align*}
so that
\begin{align*}
\hat{V}(y)=
\begin{cases}
 \frac{(b^{\alpha+1} - L b^\alpha)}{p_1 b^{p_2}- p_2 b^{p_1}} \left( p_1 y^{p_2} - p_2 y^{p_1} \right),& \text{ $y\in[1, b]$},\\
 (y-L) y^\alpha, & \text{ $y\in[b, \infty)$},
\end{cases} 
\end{align*}
and by using \eqref{fbp3}, we then know that $b\in(L, \infty)$ satisfies the following transcendental equation
\begin{align}
(\alpha+1) \left( p_1 b^{p_2} - p_2 b^{p_1}  \right)  - \alpha L \left( p_1 b^{p_2-1} - p_2 b^{p_1-1}  \right)  - p_1 p_2 (b-L) \left( b^{p_2-1} - b^{p_1-1}  \right) = 0. \label{FBF}
\end{align}

Thus, we have arrived at the following theorem:
\begin{Theorem} \label{THM1}
The value function $V$ from \eqref{TTOP} is given explicitly by 
\begin{align}
{V}(y)=
\begin{cases}
 \frac{(b^{\alpha+1} - L b^\alpha)}{p_1 b^{p_2}- p_2 b^{p_1}} \left( p_1 y^{p_2} - p_2 y^{p_1} \right),& \text{ $y\in[1, b]$},\\
 (y-L) y^\alpha, & \text{ $y\in[b, \infty)$},\label{solutions}
\end{cases}
\end{align}
that is, $V=\hat{V}$. The stopping time $\tau_b$ from \eqref{OPST} with b given as a unique solution to \eqref{FBF} above is optimal for the problem \eqref{TTOP}.
\end{Theorem}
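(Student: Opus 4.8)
The plan is to run the classical verification argument from optimal stopping theory (as in \cite[Theorem 2.7]{PeskirandShiryaev}): I would show that the function $\hat V$ constructed above is a superharmonic majorant of $G$, which yields $V\le\hat V$; that the stopping time $\tau_b$ attains this bound, which yields $V\ge\hat V$; and finally that the smooth-fit equation \eqref{FBF} determines $b$ uniquely. By construction $\hat V$ solves the Cauchy--Euler equation \eqref{CEE} on $(1,b)$, coincides with $G$ on $[b,\infty)$, and the constants $C_1,C_2$ were chosen to enforce \eqref{fbp2} and \eqref{fbp4}; imposing \eqref{fbp3}, which is precisely \eqref{FBF}, makes $\hat V$ of class $C^1$ across $b$. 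Since $b>L$, the function $G$ is $C^2$ near $b$, so $\hat V\in C^1([1,\infty))\cap C^2([1,\infty)\setminus\{b\})$; one also checks directly (the explicit form shows $\hat V>0$ on $[1,b]$) that the domination \eqref{fbp5} holds, by studying the sign of $h:=\hat V-G$ on $(\max(1,L),b)$ using $h(b)=h'(b)=0$ together with the sign of $(\mathbb L_Y-\lambda)h$ there.

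For $V\le\hat V$, I would apply the local time--space formula to $e^{-\lambda t}\hat V(Y^y_t)$, exactly as was done for $G$ just before \eqref{OPST}. The $dR_u$ term vanishes by the normal-reflection condition \eqref{fbp4}; the local-time term at $b$ vanishes by the smooth-fit condition \eqref{fbp3}; the drift term is $0$ on $\{Y^y_u<b\}$ by \eqref{fbp1} and equals $e^{-\lambda u}(\mathbb L_Y-\lambda)G(Y^y_u)$ on $\{Y^y_u>b\}$. The analytic crux is to show $(\mathbb L_Y-\lambda)G(y)\le 0$ for $y>b$: writing $G(y)=y^{\alpha+1}-Ly^\alpha$ on $(L,\infty)$ and using \eqref{QDE}, this expression equals $y^\alpha\big(Q(\alpha+1)y-LQ(\alpha)\big)$ with $Q(q):=\tfrac12\sigma^2q^2-(r+\tfrac12\sigma^2)q-\lambda$, and since $p_2<0<\alpha+1<p_1$ (the last inequality following from the definitions of $p_1$ and $\alpha$) one has $Q(\alpha+1)<0$, so the expression is negative for all $y$ past the unique sign-change point, and one verifies that $b$ lies beyond it. Granting this, $e^{-\lambda t}\hat V(Y^y_t)$ is the sum of the constant $\hat V(y)$, a continuous local martingale, and a non-increasing process; since $\hat V\ge 0$, the local-martingale part is bounded below by $-\hat V(y)$ and is therefore a supermartingale, so $e^{-\lambda t}\hat V(Y^y_t)$ is a supermartingale. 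Optional sampling at $\tau\wedge t$, the bound $\hat V\ge G\ge 0$, and Fatou's lemma as $t\to\infty$ then give $\tilde E_y\big(e^{-\lambda\tau}G(Y^y_\tau)\big)\le\hat V(y)$ for every stopping time $\tau$, whence $V(y)\le\hat V(y)$.

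For the reverse inequality, note that on $[1,b)$ the drift term above is identically zero, so $\big(e^{-\lambda(t\wedge\tau_b)}\hat V(Y^y_{t\wedge\tau_b})\big)_{t\ge 0}$ is a bounded continuous martingale (bounded because $\hat V$ is continuous on the compact $[1,b]$). By Corollary \ref{OPSIF}, $\tau_b<\infty$ $\tilde P_y$-a.s.; letting $t\to\infty$ and using bounded convergence together with \eqref{fbp2} gives
\begin{align*}
\hat V(y)=\tilde E_y\big(e^{-\lambda\tau_b}\hat V(Y^y_{\tau_b})\big)=\tilde E_y\big(e^{-\lambda\tau_b}G(Y^y_{\tau_b})\big)\le V(y),
\end{align*}
so $V=\hat V$ and $\tau_b$ is optimal. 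For uniqueness of $b$ I would argue either analytically---the left-hand side of \eqref{FBF}, viewed as a function of $b$ on $(L,\infty)$, has a constant-sign derivative and hence at most one zero, while the construction supplies one---or structurally: $V=\hat V$ is the smallest superharmonic majorant of $G$, so its stopping set $\{V=G\}$ is uniquely determined, and the argument above identifies it with $[b,\infty)$.

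The step I expect to be the main obstacle is the sign analysis $(\mathbb L_Y-\lambda)G\le 0$ on $(b,\infty)$ together with the location of the smooth-fit point $b$ relative to the sign change of that quadratic, and, intertwined with it, the verification of \eqref{fbp5} on $(\max(1,L),b)$; once these are settled the (super)martingale manipulations and the uniqueness argument are routine.
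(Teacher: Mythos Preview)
Your approach is essentially the paper's: apply the local time--space formula to $e^{-\lambda t}\hat V(Y_t^y)$, use normal reflection and smooth fit to kill the $dR$ and local-time terms, show the drift is nonpositive to obtain $V\le\hat V$ via optional sampling and Fatou, then use $\tau_b<\infty$ (Corollary~\ref{OPSIF}) to get equality; for uniqueness the paper likewise argues analytically, showing the left side of \eqref{FBF} is strictly decreasing on $(L,\infty)$, positive at $\max(1,L)$, and tending to $-\infty$.

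The two steps you flag as obstacles are simpler than you anticipate. For the drift sign, the paper computes directly
\[
(\mathbb L_Y-\lambda)G(y)=-(\lambda+r)y^{\alpha+1}+L(\lambda+2r-\sigma^2)y^\alpha=(\lambda+r)(L-y)y^\alpha+(r-\sigma^2)Ly^\alpha,
\]
which is strictly negative for every $y>L$ (using $r<\tfrac{\sigma^2}{2}<\sigma^2$); in your notation this is $Q(\alpha+1)=-(r+\lambda)$ and $Q(\alpha)=\sigma^2-2r-\lambda$, and the sign-change point $LQ(\alpha)/Q(\alpha+1)\le L$ always, so no separate comparison with $b$ is needed. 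For the domination $\hat V\ge G$, rather than going through $(\mathbb L_Y-\lambda)h$, the paper sets $h=\hat V-G$ and checks $h''(y)>0$ on $[1,b]$ directly from the explicit formula (using $p_1>1$, $p_2<0$, $b>L$, $\alpha<0$); convexity together with $h(b)=h'(b)=0$ makes $b$ the global minimum, hence $h\ge 0$. Your ODE-inequality route would also work but is less immediate.
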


\begin{proof}

(i) To prove the first part of Theorem \ref{THM1} is to verify that $V=\hat{V}$. Notice that this is the same as showing that $\hat{V}$ is the smallest superharmonic function dominating $G$.

We begin by showing that $\hat{V}(y) \geq G(y)$ for all $y\in[1, \infty)$. Let $h(y)=\hat{V}(y)-G(y)$, so that from \eqref{fbp3}, we have 
\begin{align*}
h'(b)= \hat{V}'(b)-G'(b)=0.
\end{align*}

We then wish to show that the stationary point $b$ is the global minimum of $h$ in $[1, b]$ so that $h(y)\geq h(b) = 0$ and thereby, proving that $\hat{V}(y)\geq G(y)$ in this domain. For this, we take the second derivative of $h$ and obtain
\begin{align*}
h''(y)&= \frac{b^\alpha (b-L) p_1 p_2}{p_1 b^{p_2}-p_2 b^{p_1}} \left( (p_2-1)y^{p_2-2} - (p_1-1)y^{p_1-2} \right)\\
&\qquad \qquad - \frac{y^{\alpha-2}}{\sigma^4} \left( (2r - \sigma^2) \left(2ry + L(2\sigma^2-2r)\right)  \right)>0,
\end{align*}
where the strict inequality follows from the fact that $p_1>1$, $p_2<0$, $b>L$ and $r-\frac{\sigma^2}{2}<0$, indicating the convexity of function $h$.

An appeal to the second derivative test tells us that as $h'(b)=0$ and $h''(y)>0$ for every $y\in[1, b]$, the stationary point $b$ is the global minimum point and thus demonstrating $\hat{V}(y)\geq G(y)$ for all $y\in[1, \infty)$ 

Next we are ready to show that $V(y)\leq \hat{V}(y)$ for all $y\in[1, \infty)$. From \eqref{solutions}, it is fairly obvious that $e^{-\lambda t} \hat{V}(y)$ is $C^{1,2}$ on $\mathcal{C}$ and $\mathcal{\bar{D}}$, where 
\begin{align*}
\mathcal{C}&=\{ (t, y)\in[0, \infty)\times[1,\infty): y<b \},\\
\mathcal{\bar{D}}&=\{ (t, y)\in[0, \infty)\times[1,\infty): y>b \},
\end{align*}
and therefore, we exploit the local time-space formula to obtain
\begin{align}
&e^{-\lambda t}\hat{V}(Y_t)= \hat{V}(y) + \int_0^t e^{-\lambda u} \left( \mathbb{L}_Y \hat{V} -\lambda \hat{V}  \right) (Y_u) du \nonumber\\
&\qquad +  \int_0^t e^{-\lambda u}  \hat{V}_y(Y_u) dR_u -  \int_0^t e^{-\lambda u}  \sigma Y_u \hat{V}_y(Y_u) d\tilde{W}_u \nonumber\\
&=\hat{V}(y) + \int_0^t e^{-\lambda u} \left( \mathbb{L}_Y \hat{V} -\lambda \hat{V}  \right) (Y_u) du  -  \int_0^t e^{-\lambda u}  \sigma Y_u \hat{V}_y(Y_u) d\tilde{W}_u, \label{IFV}
\end{align}
where the first equality follows via the smooth-fit condition \eqref{fbp3} and the second equality holds as $\hat{V}$ satisfies the normal reflection condition \eqref{fbp4}.

Since $\hat{V}(y)=G(y)=(y-L) y^\alpha$ in $\mathcal{\bar{D}}$, we have 
\begin{align*}
(\mathbb{L}_Y G-\lambda G)(y) &= - (\lambda+r) y^{\alpha+1} + L (\lambda +2r - \sigma^2) y^\alpha \nonumber\\
&=  (\lambda + r) (L - y) y ^{\alpha} + (r - \sigma^2) L y^\alpha < 0,
\end{align*}
where we have used the fact that  $y>b>L$ to conclude that the first term is negative, and $r-\frac{\sigma^2}{2}<0$ for the second one. This, together with \eqref{fbp1}, shows that
\begin{align}
\mathbb{L}_Y \hat{V} - \lambda \hat{V} \leq 0, \label{LLV}
\end{align}
everywhere on $[1, \infty)$ but $b$. As $\tilde{\mathbb{P}}_y(Y_u=b)=0$, we thus have
\begin{align}
 e^{-\lambda t} G(Y_t) \leq e^{-\lambda t} \hat{V}(Y_t) \leq \hat{V}(y) + \tilde{M}_t, \label{SIEV}
\end{align}
where the first inequality follows from the first observation that $\hat{V}\geq G$, the second inequality holds via \eqref{IFV} and \eqref{LLV}, and moreover, $\tilde{M}_t =-  \int_0^t e^{-\lambda u}  \sigma Y_u \hat{V}_y(Y_u) d\tilde{W}_u$ is a continuous local martingale.

Remember that a stopped martingale does not always remain a martingale, but for bounded stopping times, it always preserves its martingale property.  Therefore, let $\tau_n=\tau\wedge n$ be a bounded stopping time, for $n\geq0$ so that $\tilde{\mathbb{E}}_y \left[ \tilde{M}_{n} \right]= \tilde{\mathbb{E}}_y \left[ \tilde{M}_{\tau_n} \right]=0$.

Then, taking the expectation under measure $\tilde{P}_y$ gives us
\begin{align*}
\tilde{\mathbb{E}}_y\left[ e^{-\lambda \tau_n} G(Y_{\tau_n}) \right] \leq \hat{V}(y). 
\end{align*}
Now, let $n\to\infty$, so that $\tau_n\to\tau$. We invoke Fatou's lemma to obtain
\begin{align*}
\tilde{\mathbb{E}}_y \left[ e^{-\lambda \tau} G(Y_\tau) \right] \leq \liminf_{n\to\infty} \tilde{\mathbb{E}}_y\left[ e^{-\lambda \tau_n} G(Y_{\tau_n}) \right] \leq \hat{V}(y), 
\end{align*}
after which, we take the supremum over all stopping times $\tau$ of $Y$, together with \eqref{TTOP} such that $V(y)\leq \hat{V}(y)$ for all $y\in[1, \infty)$.

To finish off, we have to show that $V(y)=\hat{V}(y)$ for all $y\in[1,\infty)$. Let $\tau_n = \tau_b \wedge n$ for $n\geq0$ and $\tau_b$ be the finite stopping time defined in \eqref{OPST}. Then, set $t=\tau_n$ in \eqref{IFV} so that
\begin{align*}
e^{-\lambda\tau_n} \hat{V}\left( Y_{\tau_n} \right) = \hat{V}(y) + \tilde{M}_{\tau_n},
\end{align*}
of which, taking the expectation under $\tilde{P}_y$, upon using the same argument as before yields $\tilde{\mathbb{E}}_y \left[ \tilde{M}_{\tau_n} \right] = \tilde{\mathbb{E}}_y \left[ \tilde{M}_{n} \right]  = 0$ and letting $n\to\infty$,
\begin{align*}
\tilde{\mathbb{E}}_y \left( e^{-\lambda\tau_b} \hat{V}\left( Y_{\tau_b} \right) \right)= \hat{V}(y).
\end{align*}
Furthermore, we conclude that, in the view of the fact that $\hat{V}(Y_{\tau_b})= G(Y_{\tau_b})$ and \eqref{TTOP},
\begin{align*}
\hat{V}(y)=\tilde{\mathbb{E}}_y \left( e^{-\lambda\tau_b} G\left( Y_{\tau_b} \right) \right) \leq \sup_{\tau} \tilde{\mathbb{E}}_y \left( e^{-\lambda\tau} G\left( Y_{\tau} \right) \right)  = V(y),
\end{align*}
 for all $y\in[1,\infty)$, which, joining with the fact that $\hat{V}(y)\geq V(y)$, proving that the equality holds true. With $\tau_b$ being finite, \cite[Theorem 2.7, Page 30]{PeskirandShiryaev} then provides us with the positive answer for $\tau_b$ being optimal.

(ii) It thus remains to show the second part of the Theorem \ref{THM1}, that is $b$ is unique, i.e. the equation \eqref{FBF} has only one root. Let
\begin{align*}
g(y) &= (\alpha+1) \left( p_1 y^{p_2} - p_2 y^{p_1}  \right)  - \alpha L \left( p_1 y^{p_2-1} - p_2 y^{p_1-1}  \right) \nonumber\\
&\qquad \qquad \qquad - p_1 p_2 (y-L) \left( y^{p_2-1} - y^{p_1-1}  \right),
\end{align*}
for $y\in\left(\max\{1, L\}, \infty\right)$. Then, upon using  $p_1 p_2 = \frac{-2\lambda}{\sigma^2}$, we have
\begin{align*}
g'(y) & = \frac{2\lambda}{\sigma^2 y^2} (L-y) \left( (p_1-1) y^{p_1} + (1- p_2) y^{p_2} \right) \\
& \qquad \qquad + \frac{\alpha}{y^2} \left( \frac{2\lambda}{\sigma^2} (y- L)(y^{p_1} - y^{p_2}) + L(p_1 y^{p_2} - p_2 y^{p_1}) \right)<0,
\end{align*}
where the strict inequality holds via $p_1>1$, $p_2<0$ and $\alpha<0$, which implies that the map $y\mapsto g(y)$ is decreasing. To apply the intermediate value theorem, we need to further estimate the value of the endpoints:
\begin{align*}
g(1) = \left( \frac{2r}{\sigma^2} + L \left( 1- \frac{2r}{\sigma^2} \right) \right) (p_1 - p_2)>0, \qquad
g(L) = p_1 L^{p_2} - p_2 L^{p_1} > 0,
\end{align*}
where the first strict inequality follows by $\frac{2r}{\sigma^2}-1<0$. 

Let $y\to\infty$, then as $p_2<0$, we have
\begin{align}
\lim_{y\to\infty} g(y) &= \lim_{y\to\infty} \frac{2 y^{p_1}}{\sigma^2} \left( - r p_2 - \lambda \right) + \lim_{y\to\infty} L y^{p_1-1} \left(  \left(\frac{2r}{\sigma^2}-1\right) p_2 + \frac{2\lambda}{\sigma^2}  \right). \label{BIU}
\end{align}

With a little more effort, we could determine the sign of $ - r p_2 - \lambda$,
\begin{align*}
 - r p_2 - \lambda &= \frac{- \sqrt{  r^2 \left( r+\frac{\sigma^2}{2}  \right)^2 + (\lambda \sigma^2) ^2 + 2 r^2 \lambda \sigma^2 + r\lambda \sigma^4 }+  \sqrt{ r^2 \left( r+\frac{\sigma^2}{2} \right)^2 + 2 r^2 \lambda \sigma^2} }{\sigma^2} 
 <0,
\end{align*}
from which, together with $p_1>p_1-1$, we know that the first term of \eqref{BIU} heads towards $-\infty$ more rapidly than the second term heading towards $+\infty$ and thus, $g(y)\to -\infty$ as $y\to\infty$. Finally, we appeal to the intermediate value theorem, together with the fact that $y\mapsto g(y)$ is decreasing, establishing the uniqueness of $b$.
\end{proof}

\begin{figure}[h!]
\centering
\includegraphics[width=12cm]{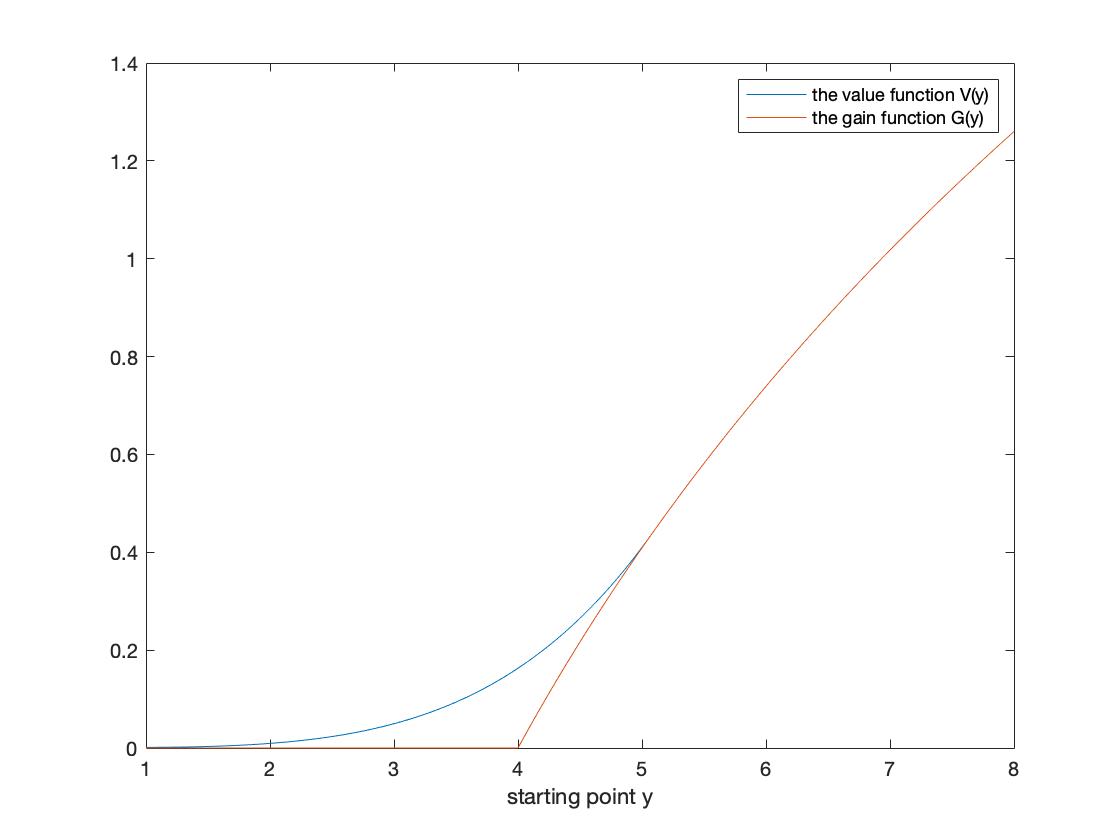}
\caption{This figure displays the maps $y\mapsto V(y)$ and $y\mapsto G(y)$ with chosen parameters $L=4$, $r=0.02$, $\sigma=0.3$, $\lambda=0.5$ and the optimal stopping point $b=5.0845$.}
\end{figure}

\section*{Acknowledgements}
The authors are greatly indebted to Professor Ben Goldys for very helpful discussions.

\end{document}